

\documentclass[reqno]{amsart}

\synctex=1

\title{Surfaces with concentric or parallel $K$-contours}

\author{Shoichi Fujimori}
 \address[Shoichi Fujimori]{%
  Department of Mathematics, Hiroshima University,
  Higashihiroshima, Hiroshima 739-8526, Japan
}
\email{fujimori@hiroshima-u.ac.jp}

\author{Yu Kawakami}
 \address[Yu Kawakami]{%
   Faculty of Mathematics and Physics,
   Kanazawa University,
   Kanazawa, 920-1192, Japan
}
\email{y-kwkami@se.kanazawa-u.ac.jp}

\author{Masatoshi Kokubu$^*$}
\address[Masatoshi Kokubu]{%
   Department of Mathematics,
   School of Engineering,
   Tokyo Denki University,
   5 Senju-Asahi-Cho, Adachi-Ku
   Tokyo, 120-8551,
   Japan 
}
\email{kokubu@cck.dendai.ac.jp}
\date{April 21, 2024}
\thanks{$^*$Corresponding author}

\usepackage{amsmath}
\usepackage{amssymb}
\usepackage{amsthm}
\usepackage{enumerate}
\usepackage{graphicx}


%

\pagestyle{plain}

\newcommand\fff{\, \mathrm{I}}
\newcommand\sff{\, \mathrm{I\!I}}

\newcommand\pd{\partial}
\newcommand\R{{\mathbb R}}

\newcommand\xbf{{\boldsymbol x}}
\newcommand\nbf{\boldsymbol n}
\newcommand\pbf{\boldsymbol p}
\newcommand\vbf{\boldsymbol v}

\renewcommand\Re{\operatorname{Re}}
\renewcommand\Im{\operatorname{Im}}

\theoremstyle{plain}
\newtheorem{theorem}{Theorem}[section]

\newtheorem{corollary}[theorem]{Corollary}

\theoremstyle{definition}
\newtheorem{definition}[theorem]{Definition}
\newtheorem{remark}[theorem]{Remark}

\numberwithin{equation}{section}

\subjclass{53A05}
\keywords{Gaussian curvature, Gauss map, $K$-contour}
\usepackage{version}	
\begin{document}

\begin{abstract}
Surfaces with concentric $K$-contours and  parallel $K$-contours
in Euclidean $3$-space are defined. 
Crucial examples are presented and characterization of them are given. 
\end{abstract}

\maketitle


\section{Introduction}
The contours of the Gaussian curvature
function $K$ on the graph surface 
\begin{equation}\label{eq:first-example}
 z= \frac{x}{x^2 + y^2} 
\end{equation}
in the Euclidean $3$-space $(\R^3; x,y,z)$ 
map to concentric circles on the $xy$-plane by orthogonal projection,  
so it would be permissible to say that the surface \eqref{eq:first-example}
has weak symmetry in some sense.  
We will refer to this property 
by saying a surface has \textit{concentric $K$-contours}. 
We can immediately note that helicoidal surfaces 
have the same property. (Here a \textit{helicoidal surface} 
is, by definition, a surface in $\R^3$ which is invariant under 
a one-parameter group of rigid screw motions;    
it is a generalization of both surfaces of revolution and right helicoids. 
A helicoidal surface is also called 
a \textit{generalized helicoid} (cf. \cite{docarmo})).   
We also found that the surface called 
a \textit{monkey saddle} has the same property.
(See Section 22.2 in \cite{gray},  
where the monkey saddle appears as an example for which 
the converse of Gauss' Theorema Egregium does not hold.)
In view of these circumstances, simple questions come to mind: 
\begin{enumerate}[(i)]
 \item 
 Are there any surfaces with concentric $K$-contours other than 
\eqref{eq:first-example}, helicoidal surfaces or the monkey saddle? 
 \item 
 Can we find all surfaces with concentric $K$-contours?   
\end{enumerate}
The authors searched the literature, but failed to find research on this.

One of our purposes is to provide a family of examples, 
denoted by $\xbf_{m,c}$ in this paper,  
which includes both \eqref{eq:first-example} and the monkey saddle. 
Another purpose is to give a partial answer to the question (ii). 
In fact, under a certain assumption, any surface with concentric $K$-contours 
must be a surface $\xbf_{m,c}$ or a helicoidal surface (Theorem \ref{thm:cKc-dAinv}).  

On the other hand, it has been an interesting problem to understand
how much the behavior of the Gauss map determines the surface.
For instance, Kenmotsu \cite{kenmotsu} showed a representation theorem for an arbitrary surface in $\R^3$ 
in terms of the Gauss map and the mean curvature function of the surface. 
In addition to this, Hoffman, Osserman and Schoen \cite{hos} proved that for a complete oriented surface of 
constant mean curvature in $\R^3$, if its Gauss image lies in some open hemisphere, 
then it is a plane; if the Gauss image 
lies in a closed hemisphere, then it is a plane or a right circular cylinder. In this paper,  
we will show that a behavior of the Gauss map, 
called \textit{semi-rotational equivariance},  
characterizes the surfaces $\xbf_{m,c}$
(Theorem \ref{thm:semi-rot-G}). 

\medskip

This paper also reports on the case where concentric circles are replaced by 
parallel straight lines. 
We say that a surface has \textit{parallel K-contours} if  
the contours of the Gaussian curvature function $K$ 
produce parallel straight lines on a plane by orthogonal projection. 

\medskip

We refer to standard textbooks \cite{docarmo}, \cite{kobayashi}, \cite{uy}, etc, 
for fundamental facts about surface theory. 

\section{Surfaces with concentric $K$-contours}\label{sec:s-cKc}

Throughout this paper, we shall 
use the following notation and assumption:
$M$ denotes a connected, smooth $2$-manifold and 
$\xbf \colon M \to \R^3$ a smooth immersion.  
$K$ denotes the Gaussian curvature function on $M$.    
We set $M_k := \{ p \in M \mid K(p) =k \}$ for a real number $k$,  
and consider the family $\mathcal C := \{ M_k \}_{ k \in \R }$. 
It is always assumed that 
\textit{$M$ has no open subset where $\operatorname{grad} K =0$}
because 
we wish to study the case where $\mathcal C$ is formed by a family of curves.

\begin{definition}\label{def:cKc}
We say that 
$\xbf \colon M \to \R^3$  has \textit{concentric K-contours} if 
there exists a plane in $\R^3$ such that the orthogonal projection 
 $\pi \colon \R^3 \to P$ maps $\mathcal C$ to a family 
of concentric circles on $P$. 
\end{definition}

It is obvious that helicoidal surfaces have concentric $K$-contours. 

\subsection{ A non-helicoidal example}

Let $m$ be an integer not equal to $0, 1$, and
let $c$ be a non-zero real number. 
Consider a graph surface 
\begin{equation}\label{eq:cKlc-2}
\xbf_{m,c}(z) = \left( \Re z, \Im z, c \Re (z^m) \right) = 
\left( x,y, \frac{c}{2} \left\{ (x+iy)^m + (x-iy)^m \right\} \right)
\end{equation}
for $z=x+iy$.
Note that $\xbf_{-1,1}$ and $\xbf_{3,1}$ coincide with 
the surface \eqref{eq:first-example} and the monkey saddle, respectively. 
In terms of the polar coordinates $z=re^{i \theta}$, $\xbf_{m,c}$ is expressed as
\begin{equation}\label{eq:cKlc}
 \xbf_{m,c}(r,\theta) = 
\left( r \cos \theta, r \sin \theta, c r^m \cos m \theta \right).  
\end{equation}
See Figures~\ref{fg:circlep} and \ref{fg:circlen}.
%
\begin{figure}[htbp] 
\centering
 \includegraphics[width=.40\linewidth]{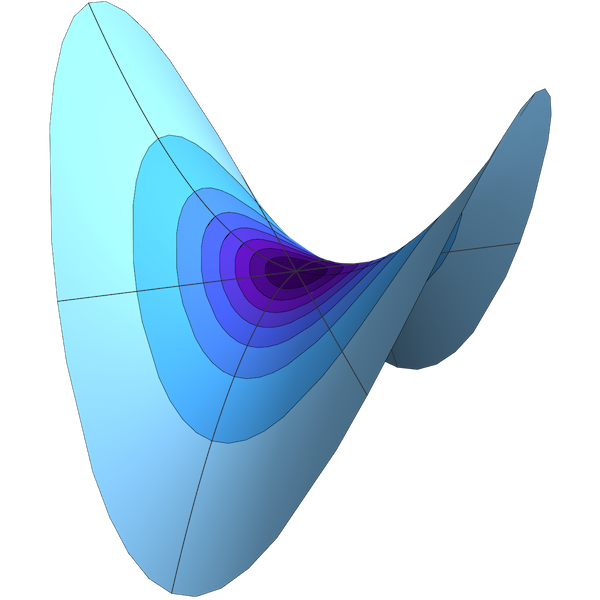} \ 
 \includegraphics[width=.40\linewidth]{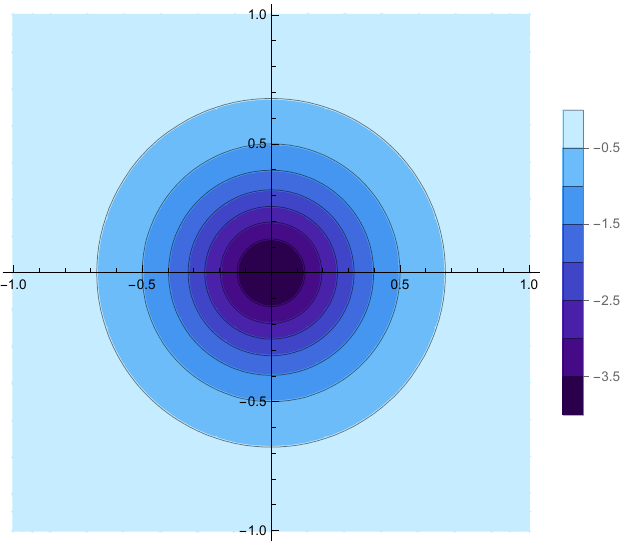} \\ 
 $c=1$, $m=2$ \\
 \includegraphics[width=.40\linewidth]{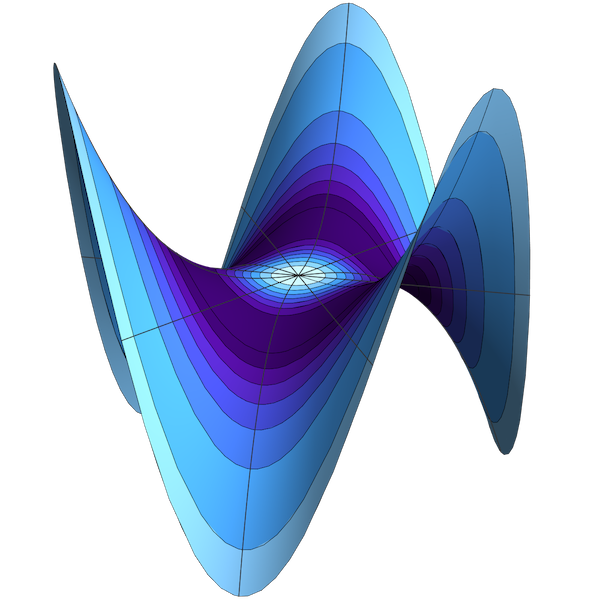} \ 
 \includegraphics[width=.40\linewidth]{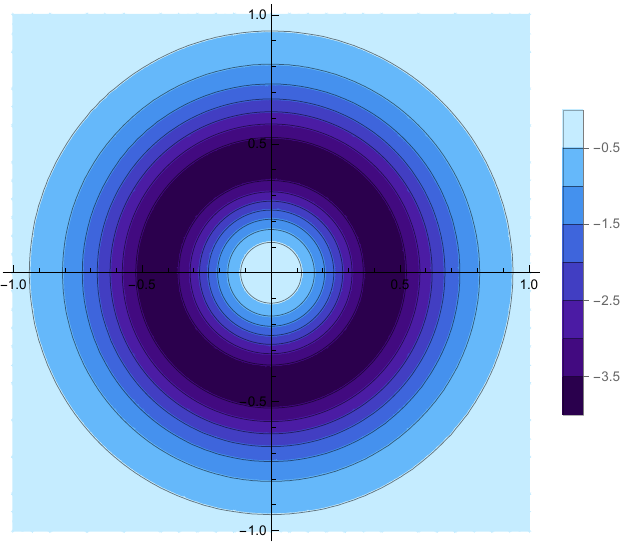} \\
 $c=1$, $m=3$ 
\caption{
The surfaces $\xbf_{m,c}$ (left) and their projections to the $xy$-plane (right) 
with positive integers $m$. 
Each surface is gray-scaled by its Gaussian curvature.}
\label{fg:circlep} 
\end{figure} 
\begin{figure}[htbp] 
\centering
 \includegraphics[width=.40\linewidth]{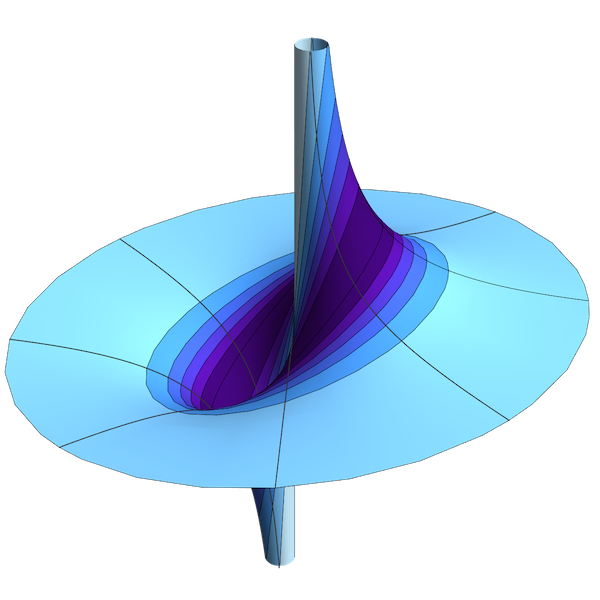} \ 
 \includegraphics[width=.40\linewidth]{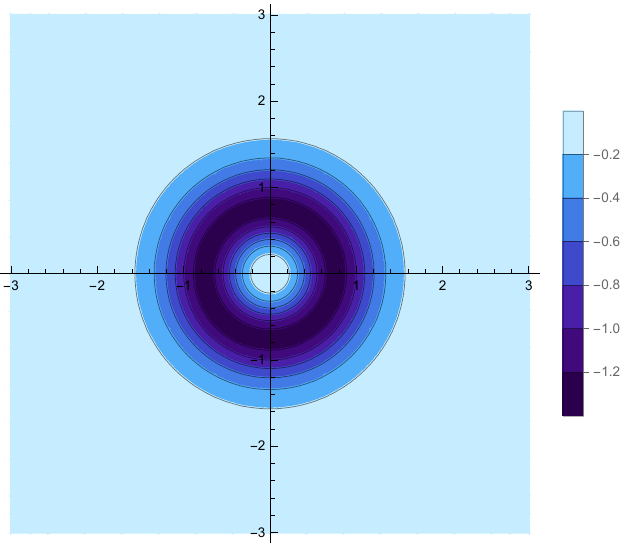} \\ 
 $c=1$, $m=-1$ \\
 \includegraphics[width=.40\linewidth]{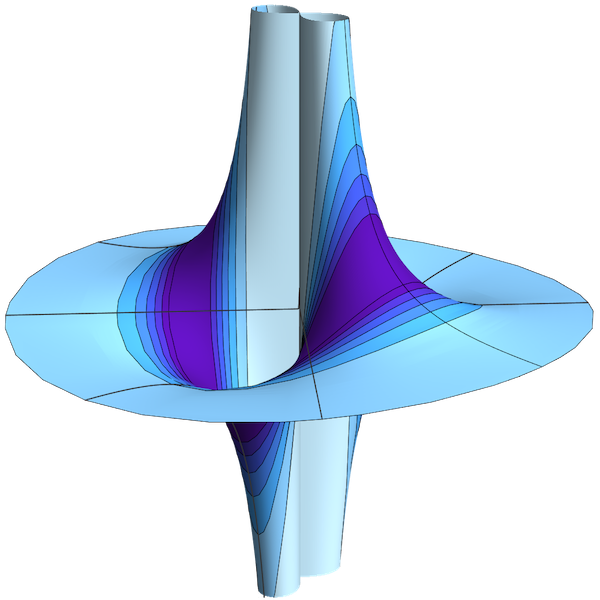} \ 
 \includegraphics[width=.40\linewidth]{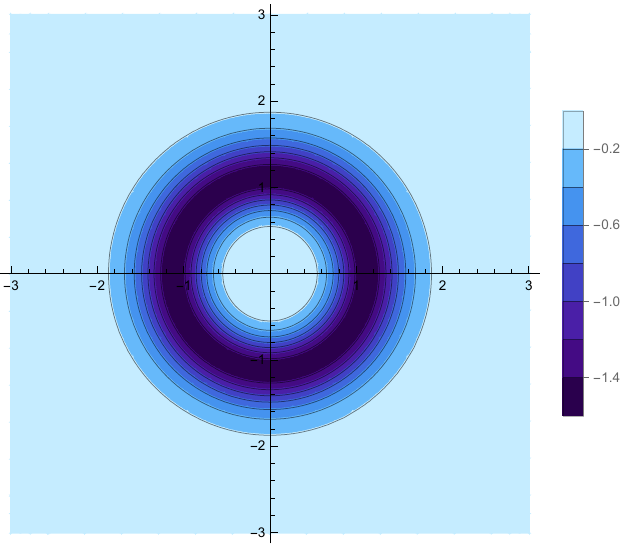} \\ 
 $c=1$, $m=-2$ 
\caption{
The surfaces $\xbf_{m,c}$ (left) and their projections to the $xy$-plane (right) 
with negative integers $m$. 
Each surface is gray-scaled by its Gaussian curvature.}
\label{fg:circlen} 
\end{figure} 
\begin{figure}[htbp] 
\centering
 \includegraphics[width=.40\linewidth]{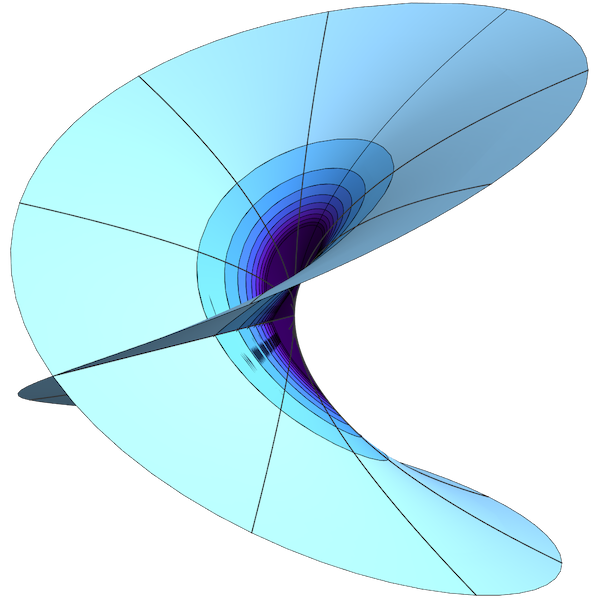} \ 
 \includegraphics[width=.40\linewidth]{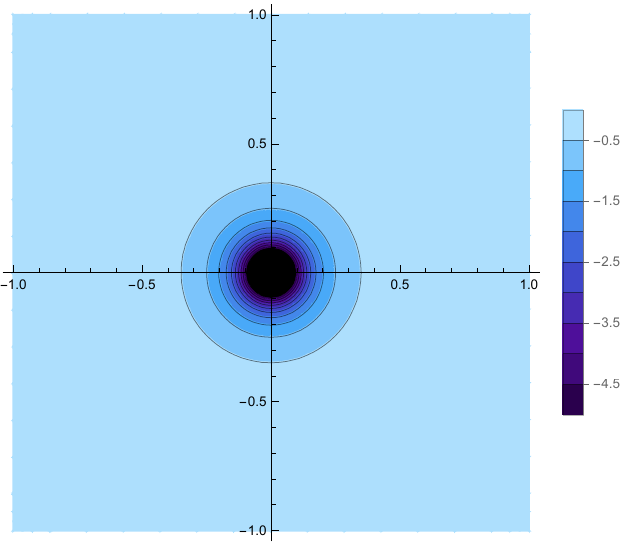} \\ 
 $c=1$, $m=1/2$ \\
 \includegraphics[width=.40\linewidth]{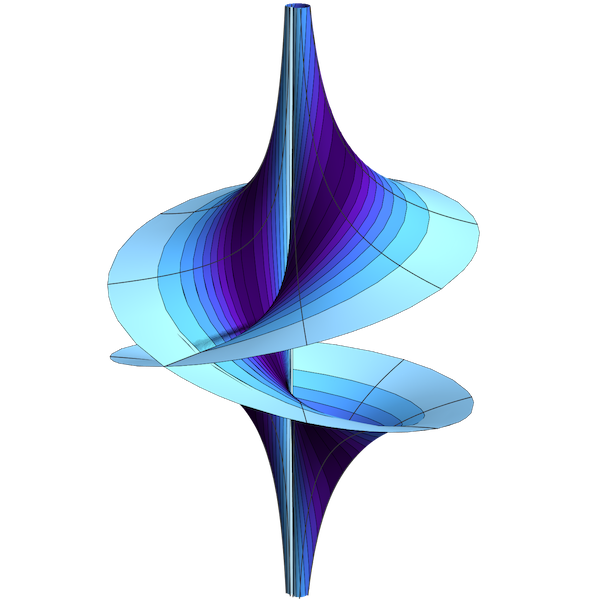} \ 
 \includegraphics[width=.40\linewidth]{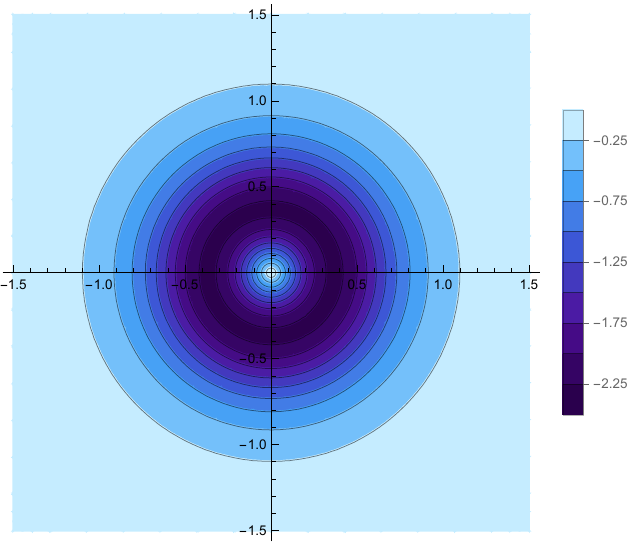} \\ 
 $c=1$, $m=-1/2$ 
\caption{
The surfaces $\xbf_{m,c}$ (left) and their projections to the $xy$-plane (right) 
with non integers $m$. 
Each surface is gray-scaled by its Gaussian curvature.}
\label{fg:circler} 
\end{figure} 
The first and second fundamental forms $\fff, \sff$ and a unit normal $\nbf$ 
are as follows:  
\begin{align}
& \begin{aligned}
 \fff = & \, (1+c^2 m^2 r^{2m-2} \cos^2 m \theta)dr^2 + 
2 (-c^2 m^2 r^{2m-1} \cos m \theta \sin m \theta) dr d \theta \\ 
 & + (r^2 + c^2 m^2 r^{2m} \sin^2 m \theta) d \theta^2 ,  
\end{aligned} \notag
\\
& \nbf(r,\theta) =  \frac{1}{r \sqrt{1+ c^2 m^2 r^{2m-2}}}
\left( -c m r^m \cos (m-1)\theta, c m r^m \sin (m-1)\theta, 
r  \right) , \label{al:Gmap-xm} \\
& \sff = \frac{cm(m-1)}{r \sqrt{1+c^2 m^2 r^{2m-2}}}
\left\{ 
 r^{m-1} \cos m \theta dr^2 
-2r^{m} \sin m \theta dr d \theta 
 -r^{m+1} \cos m \theta d \theta^2
\right\}. \notag
\end{align}
From these, the Gaussian curvature $K$ and the mean curvature $H$ are 
\begin{align}
& K = K(r) = - \frac{c^2 m^2(m-1)^2 r^{2m-4}}{(1+c^2 m^2 r^{2m-2})^2}, \label{al:Gauss-cKc} \\ 
& H = H(r, \theta) = 
- \frac{c^3 m^3(m-1) r^{3m-4} \cos m \theta}{2 (1+c^2 m^2 r^{2m-2})^{3/2}}. 
\notag 
\end{align}
It follows directly from \eqref{al:Gauss-cKc} that 
$\xbf_{m,c}$ has concentric $K$-contours with respect to the $xy$-plane.    
Note that the first fundamental form $\fff$ does not have  
rotational symmetry but the Gaussian curvature $K$ does. 

\begin{remark}
\begin{enumerate}
 \item It follows from \eqref{eq:cKlc-2} that $\xbf_{m,c}$ is an entire graph over 
the $xy$-plane if $m$ is a positive integer.
 In particular, 
$\xbf_{m,c}$ is a hyperbolic paraboloid if $m=2$ and 
a monkey saddle if $m=3$. 
In the case where $m$ is a negative integer, 
$\xbf_{m,c}$ is a graph punctured at the origin. 
\item 
Although $\xbf_{m,c}$ can be defined for $m=0,1$ or $c=0$, it is a plane hence 
has constant Gaussian curvature zero. 
Therefore we exclude the case $m=0,1$ and the case $c=0$.
\end{enumerate} 
\end{remark}

It follows from \eqref{eq:cKlc} that $\xbf_{m,c}$ can be defined even if 
$m$ is a non-integer as a multi-valued graph over $\R^2 \setminus \{ (0,0) \}$
or a surface defined on the universal cover. See Figure 3.
From now on, we assume that \textit{the number $m$ for $\xbf_{m,c}$ 
does not have to be an integer, that is, 
$m \in \R \setminus \{ 0,1 \}$. 
}  

\subsection{Semi-rotational equivariance}
We also call the unit normal \eqref{al:Gmap-xm}
the \textit{Gauss map} of $\xbf_{m,c}$ according to custom. 
One can see from \eqref{al:Gmap-xm} that 
\begin{equation*}
 \nbf (r, \theta + \alpha) = \mathcal R_{(1-m)\alpha} \circ \nbf (r , \theta), 
\end{equation*}
where $\mathcal R_{(1-m)\alpha}$ denotes the rotation of angle $(1-m)\alpha$ 
with respect to the $z$-axis. 
Focusing on this property, we give the following definition:
\begin{definition}
 A surface $\xbf \colon M \to \R^3$ is said to have 
\textit{semi-rotational Gauss map} if there exist 
a straight line 
$l \subset \R^3$, 
a plane $P \subset \R^3$, and a $1$-parameter group 
$\{ \phi_t \}$ of diffeomorphisms of $M$ such that 
\begin{enumerate}
 \item $l$ is orthogonal to $P$, 
 \item $\pi \circ \xbf \circ \phi_t = R_t \circ \pi \circ \xbf$, and
 \item $\nbf \circ \phi_t = \mathcal R_{k t} \circ \nbf$ 
for some constant $k$
\end{enumerate}
with a suitable choice of orientations of $l$ and $P$,  
where $\pi \colon \R^3 \to P$ is orthogonal projection,  
$R_t$ denotes a rotation on $P$ of angle $t$ with the center $P \cap l$, 
and $\mathcal R_{k t}$ denotes a rotation in $\R^3$ of angle $kt$  
with respect to the axis $l$. 
\end{definition}
 
Note that a helicoidal surface has semi-rotational Gauss map with $k=1$, 
which should be said to have \textit{rotational} Gauss map. 
So we shall use the term \textit{`strictly semi-rotational'} 
in the sense of `semi-rotational but not rotational'. 

\subsection{Characterizations of the surface $\xbf_{m,c}$}
\begin{theorem}\label{thm:cKc-dAinv}
 Let $\xbf \colon M \to \R^3$ be a surface with concentric $K$-contours. 
If the area element $dA$ is invariant along each $K$-contour, 
then $\xbf$ is a helicoidal surface or 
locally congruent to a surface $\xbf_{m,c}$
for some $m,c$. 
\end{theorem}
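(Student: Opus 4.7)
The plan is to work in polar coordinates lifted from $P$ via the projection $\pi$, translate each hypothesis into a PDE for the height function, and then solve via an angle substitution.

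After choosing Euclidean coordinates with $P=\{z=0\}$ and the common center of the circles $\pi(M_k)$ at the origin, on any open set where $\pi\circ\xbf$ is a local diffeomorphism I will write the surface as a graph $\xbf(r,\theta)=(r\cos\theta,r\sin\theta,f(r,\theta))$. The concentric $K$-contour hypothesis then reads $K=K(r)$, while invariance of $dA$ along each $K$-contour---whose tangent vector is $\partial_\theta$ in these coordinates---becomes $r^2 f_r^2+f_\theta^2=V(r)^2$ for some positive function $V$. I then introduce an angle function $\alpha(r,\theta)$ by
\[
 rf_r \;=\; V(r)\cos\alpha, \qquad f_\theta \;=\; V(r)\sin\alpha,
\]
and observe that the compatibility $(f_r)_\theta=(f_\theta)_r$ becomes the PDE
\[
 -\tfrac{V}{r}\sin\alpha\cdot\alpha_\theta \;=\; V'\sin\alpha + V\cos\alpha\cdot\alpha_r,
\]
which I denote by $(\star)$.

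The key step is to substitute the expressions for $f_r,f_\theta$ and their second derivatives into the standard graph formula for $K$ and then to apply $(\star)$ to eliminate the term involving $\alpha_r$ from the numerator. Several cancellations should collapse the result to the compact identity
\[
 K\,(r^2+V^2)^2 \;=\; (1+\alpha_\theta)\,(rVV'-V^2).
\]
Because $K$ depends only on $r$ and the assumption that $\operatorname{grad}K$ does not vanish on any open set excludes $rVV'\equiv V^2$ (which would force $K\equiv0$), I conclude that $\alpha_\theta$ depends only on $r$. Writing $\alpha_\theta=A(r)-1$ and integrating gives $\alpha(r,\theta)=(A(r)-1)\theta+B(r)$ for some functions $A,B$ of $r$.

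Plugging this ansatz back into $(\star)$ produces a dichotomy. If $A\equiv1$, then $\alpha$ is a function of $r$ only, so $f_\theta=V(r)\sin\alpha(r)$ is $\theta$-independent; integrating in $\theta$ and then demanding that $f_r$ also be $\theta$-independent forces $V\sin\alpha$ to equal a constant $c$, whence $f(r,\theta)=c\theta+g(r)$ is a helicoidal surface. Otherwise $A\not\equiv1$ on some open interval, and $(\star)$ rearranges to
\[
 \tan\bigl((A(r)-1)\theta + B(r)\bigr) \;=\; \frac{V\,(A'(r)\theta+B'(r))}{-V(A-1)/r-V'}.
\]
The left side is $\pi/|A-1|$-periodic in $\theta$ while the right side is linear in $\theta$, which forces $A'\equiv B'\equiv 0$ and $V'/V=(1-A)/r$; thus $A$ and $B$ are constants and $V=cr^{1-A}$. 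Setting $m=1-A$ and absorbing $B$ into a rotation about the $z$-axis then yields $f(r,\theta)=(c/m)r^m\cos(m\theta)$, identifying $\xbf$ with $\xbf_{m,c/m}$ up to a rigid motion.

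The hard part will be the cancellation producing the clean identity for $K$: the expanded numerator contains six distinct terms carrying various monomials in $\alpha_r$ and $\alpha_\theta$, and the collapse occurs only after the single term $rV^2\sin\alpha\cos\alpha\cdot\alpha_r$ is rewritten via $(\star)$ at precisely the right moment. A secondary technicality will be the rigorous periodic-versus-linear argument that excludes non-constant $A$, together with handling points where $f_r$ or $f_\theta$ vanishes and $\alpha$ is ill-defined; these should be dispatched by working on the open dense subset where $\alpha$ is smooth and appealing to connectedness and continuity.
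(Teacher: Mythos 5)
Your proposal is correct and follows essentially the same route as the paper's proof: the polar graph $(r\cos\theta,r\sin\theta,F(r,\theta))$, the observation that invariance of $dA$ makes $r^2F_r^2+F_\theta^2$ a function of $r$, the substitution $rF_r=V\cos\alpha$, $F_\theta=V\sin\alpha$, the identical key identity $K(r^2+V^2)^2=(1+\alpha_\theta)(rVV'-V^2)$ forcing $\alpha$ to be linear in $\theta$, and the same dichotomy (helicoidal vs.\ $\xbf_{m,c}$). The only cosmetic difference is that you resolve the dichotomy by substituting the ansatz back into the compatibility equation and using a periodicity-versus-affine argument (where you should note that the relevant denominator is exactly the quantity forced to vanish, rather than literally dividing by it), while the paper differentiates the compatibility relation in $\theta$; both give the same conclusion.
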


\begin{theorem}\label{thm:semi-rot-G}
 Let a surface $\xbf \colon M \to \R^3$ have semi-rotational Gauss map.  
Then $\xbf$ is a helicoidal surface or 
locally congruent to a surface $\xbf_{m,c}$ for some $m,c$. 
\end{theorem}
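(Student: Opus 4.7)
The plan is to introduce coordinates on $M$ adapted to the one-parameter group $\{\phi_t\}$, put $\xbf$ locally in graph form over $P$, translate condition~(3) into a differential equation for the height function $h$, and solve it by splitting into cases on the parameter $k$.

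After normalizing ambient coordinates so that $P$ is the $xy$-plane and $l$ is the $z$-axis, I would first show that on a dense open subset $M$ admits coordinates $(r,\theta)$ with $\phi_t(r,\theta)=(r,\theta+t)$ in which
\[
 \xbf(r,\theta)=(r\cos\theta,\, r\sin\theta,\, h(r,\theta)).
\]
Let $X$ be the infinitesimal generator of $\{\phi_t\}$. Condition~(2) forces $(\pi\circ\xbf)_*X$ to agree with the standard rotation vector field on $P$, so $X$ can vanish only where $\pi\circ\xbf$ sends a point to the origin; off this set, flow-box coordinates $(s,\tau)$ with $\phi_t(s,\tau)=(s,\tau+t)$ exist, and condition~(2) forces the $\tau$-orbit of $\pi\circ\xbf$ to sweep a circle of some radius $\rho(s)$ centered at the origin. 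The standing hypothesis $\operatorname{grad}K\not\equiv 0$ excludes $\rho'\equiv 0$ on any open set, since that would land $\xbf$ on a vertical cylinder where $K\equiv 0$; absorbing the resulting angular phase into $\tau$ and passing to $r=\rho(s)$ then yields the graph parametrization above.

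With this in hand I would turn condition~(3) into a PDE for $h$. The unit normal is $\nbf=(-h_x,-h_y,1)/\sqrt{1+h_x^2+h_y^2}$, so invariance of its third component under $\phi_t$ makes $h_x^2+h_y^2$ depend only on $r$, while rotation of the horizontal components at angular speed $k$ amounts, in complex notation, to $h_x+\iii h_y=e^{\iii k\theta}\,b(r)$ for some complex-valued $b=B_1+\iii B_2$ of $r$ alone. The polar identity $h_x+\iii h_y=e^{\iii\theta}(h_r+\iii h_\theta/r)$ then gives $h_r+\iii h_\theta/r=e^{\iii(k-1)\theta}b(r)$, and imposing the integrability condition $h_{r\theta}=h_{\theta r}$ reduces to
\[
 (kB_1+rB_1')\sin((k-1)\theta)+(kB_2+rB_2')\cos((k-1)\theta)=0.
\]

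Finally I would conclude by cases. If $k\ne 1$, linear independence of the two trigonometric functions in $\theta$ forces $B_j(r)=C_jr^{-k}$, and integrating the resulting expressions for $h_r$ and $h_\theta$ yields $h(r,\theta)=c\,r^m\cos(m\theta+\delta)$ with $m=1-k$; an ambient rotation about the $z$-axis absorbs $\delta$ and identifies $\xbf$ locally with $\xbf_{m,c}$. If $k=1$, only the cosine term survives, so $(rB_2)'=0$ and hence $h_\theta\equiv c$ and $h_r=B_1(r)$, giving $h(r,\theta)=c\theta+g(r)$, the graph form of a helicoidal surface of pitch $c$ (a surface of revolution when $c=0$). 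The main technical obstacle is the first step---building the graph coordinates on a useful open set---because condition~(2) only controls $\pi\circ\xbf$ and $X$ may have a nontrivial zero set; once the standing hypothesis on $\operatorname{grad}K$ is used to rule out the vertical-cylinder degeneracy, the rest is routine ODE analysis.
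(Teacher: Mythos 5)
Your proposal is correct and takes essentially the same route as the paper: write $\xbf$ locally as a polar graph over $P$, translate condition~(3) into the statement that the gradient data $(h_r,\,h_\theta/r)$ rotates at constant angular speed with $r$-dependent amplitude, and use $h_{r\theta}=h_{\theta r}$ to get the ODE whose solutions give helicoidal surfaces when $k=1$ and $\xbf_{m,c}$ with $m=1-k$ when $k\neq 1$. Your preliminary step constructing the adapted coordinates from the flow of $\phi_t$ (and excluding the vertical-cylinder degeneracy via the standing hypothesis on $\operatorname{grad}K$) is in fact more careful than the paper, which simply asserts the polar-graph parametrization.
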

\begin{corollary}\label{cor:semi-rot-G}
 Let a surface $\xbf \colon M \to \R^3$ have strictly 
semi-rotational Gauss map.  
Then $\xbf$ is 
locally congruent to a surface $\xbf_{m,c}$ for some $m,c$. 
\end{corollary}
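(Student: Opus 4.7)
The strategy is to invoke Theorem~\ref{thm:semi-rot-G} and to rule out the helicoidal alternative using the strictness hypothesis $k \neq 1$.

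First, I would apply Theorem~\ref{thm:semi-rot-G} to obtain the dichotomy: $\xbf$ is either helicoidal or locally congruent to some $\xbf_{m,c}$. Since the conclusion we want is precisely the second alternative, the whole task reduces to showing that a helicoidal surface cannot carry a strictly semi-rotational Gauss map.

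Suppose for contradiction that $\xbf$ is helicoidal. I would first argue that the line $l$ appearing in the semi-rotational data must coincide with the screw axis of the helicoidal structure. Indeed, the $K$-contours of a helicoidal surface are the helical orbits of the screw group, and they project to concentric circles only under the orthogonal projection along the screw axis; on the other hand, $l$ is by definition the line orthogonal to the plane $P$ on which the $K$-contours appear as concentric circles. Adopting a local helicoidal parametrization $\xbf(r,\theta) = (r\cos\theta, r\sin\theta, f(r) + a\theta)$ adapted to~$l$, condition~(2) of the definition, $\pi\circ\xbf\circ\phi_t = R_t\circ\pi\circ\xbf$, forces $\phi_t(r,\theta) = (r,\theta + t)$ locally; the $1$-parameter group property together with $\phi_0 = \mathrm{id}$ and continuity pin down this lift unambiguously.

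A direct computation of $\xbf_r \times \xbf_\theta$ then gives $\nbf(r,\theta) = \mathcal R_\theta\, \nbf(r,0)$, whence $\nbf\circ\phi_t(r,\theta) = \mathcal R_t\, \nbf(r,\theta)$. Combined with condition~(3), this yields $\mathcal R_{(k-1)t}\nbf = \nbf$ for every $t$. The standing hypothesis $\operatorname{grad} K \not\equiv 0$ prevents $\nbf$ from being everywhere parallel to $l$, so this relation forces $k = 1$, contradicting strictness. Hence $\xbf$ must be locally congruent to some $\xbf_{m,c}$.

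The main obstacle is the rigidity step: verifying that condition~(2) together with the $1$-parameter group structure really leaves no ``vertical'' freedom in $\phi_t$, especially where the helicoidal parametrization may fail to be globally injective. Working on an open coordinate neighborhood where the parametrization is a diffeomorphism sidesteps this subtlety locally, and since the resulting equality $k = 1$ involves only the global constant $k$, the contradiction with the strictness hypothesis is unaffected.
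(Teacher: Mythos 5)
Your overall strategy---invoke Theorem~\ref{thm:semi-rot-G} and use strictness to discard the helicoidal alternative---is the intended one, and the conclusion is correct. The problem lies in how you discard the helicoidal case. Your pivotal step is the claim that the axis $l$ of the semi-rotational data must coincide with the screw axis, and you justify it by saying that $l$ is ``by definition the line orthogonal to the plane $P$ on which the $K$-contours appear as concentric circles.'' That is not the definition you are working with: concentric $K$-contours is Definition~\ref{def:cKc}, a different notion. The definition of semi-rotational Gauss map only imposes the equivariance conditions (1)--(3) on $\pi\circ\xbf$ and on $\nbf$ under $\{\phi_t\}$ and says nothing about $K$-contours, so the identification of $l$ with the screw axis does not follow from the hypotheses as you argue it (it is only true a posteriori, essentially by redoing the computation of Theorem~\ref{thm:semi-rot-G}); moreover, your auxiliary assertion that the helical orbits project to concentric circles only along the screw axis is stated without proof. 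Without that identification, your adapted parametrization $(r\cos\theta,r\sin\theta,f(r)+a\theta)$ and the ensuing computation $\nbf\circ\phi_t=\mathcal R_t\circ\nbf$ are not available, so the contradiction $k=1$ is not established.

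The gap is easy to close, and closing it shows the detour is unnecessary. The paper defines ``strictly semi-rotational'' as ``semi-rotational but not rotational'' and has already noted that every helicoidal surface has rotational Gauss map (semi-rotational with $k=1$, with respect to its own screw axis); hence a surface with strictly semi-rotational Gauss map cannot be helicoidal, and Theorem~\ref{thm:semi-rot-G} leaves only the alternative $\xbf_{m,c}$. Alternatively, if you prefer to read strictness as ``the given data has $k\neq 1$,'' argue inside the proof of Theorem~\ref{thm:semi-rot-G}: with $\phi_t(r,\theta)=(r,\theta+t)$ and $V=\varDelta^{-1}(-rF_r,-F_\theta,r)$ one gets $V(r,\theta+t)=\mathcal R_{(k-1)t}V(r,\theta)$, so $k\neq 1$ rules out case (i) and the subcase $m=0$ of case (ii): in either of those the horizontal part of $V$ at fixed $r$ would be a single vector invariant under all rotations $\mathcal R_{(k-1)t}$, hence zero, forcing $F$ to be constant and the surface to be a plane, which is excluded by the standing assumption on $\operatorname{grad}K$. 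One then lands directly in case (ii) with $m\neq 0$, i.e.\ on $\xbf_{m,c}$, without ever discussing helicoidal surfaces.
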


Before proving the theorems above, we write down formulas 
for the area element $dA$, the Gaussian curvature $K$ and 
the unit normal field $\nbf$ for a surface  
$\xbf(r, \theta) = \left( r \cos \theta, r \sin \theta, F(r,\theta) \right)$ : 
\begin{align}
 dA &= \varDelta \, dr \wedge d \theta, \label{al:dA=Delta} \\ 
 \nbf &= \frac{1}{\varDelta} \left( F_\theta \sin \theta -r F_r \cos \theta , \ 
- r F_r \sin \theta - F_\theta \cos \theta, \ r  \right),  
\label{al:unit-normal} \\ 
 K &= \frac{1}{\varDelta^4} \left\{ 
r^2F_{rr}( rF_r + F_{\theta \theta}) - (F_\theta - r F_{r \theta})^2 \right\} , 
\label{al:Gauss-curv}
\end{align}
where 
\begin{equation}\label{eq:Delta}
 \varDelta = \sqrt{ r^2 + r^2 F_r^2 + F_\theta^2  }. 
\end{equation}

\begin{proof}[Proof of Theorem \ref{thm:cKc-dAinv}]
Considering a rigid motion in $\R^3$, 
we may assume that the plane $P$ is the $xy$-plane and $K$-contours 
draw concentric circles with the center $(0,0)$ in the $xy$-plane. 
$\xbf$ is at least locally re-parameterized 
as $\xbf(r, \theta) = \left( r \cos \theta, r \sin \theta, F(r,\theta) \right)$. 
The function $\varDelta$ is of one variable $r$ because of  
\eqref{al:dA=Delta} and the assumption of invariance of $dA$. 
It follows from \eqref{eq:Delta} that $r^2 F_r^2 + F_\theta^2$ is also a function  
of one variable $r$. Therefore, there exist functions $\alpha = \alpha(r)$, 
$\beta = \beta(r, \theta)$ such that 
\begin{equation}\label{eq:Fth=ac-rFr=as}
rF_r = \alpha \cos \beta, \ F_\theta = \alpha \sin \beta . 
\end{equation}
By differentiating \eqref{eq:Fth=ac-rFr=as}, we have  
\begin{align} 
r^2 F_{rr} &=(r \alpha' - \alpha) \cos \beta - r \alpha \beta_r \sin \beta,  
\label{al:Frr} \\
 r (F_r)_\theta &= -\alpha \sin \beta \cdot \beta_\theta, \label{al:Frth} \\ 
(F_\theta)_r &= \alpha' \sin \beta + \alpha \cos \beta \cdot \beta_r, \label{al:Fthr} \\
F_{\theta \theta} &= \alpha \cos \beta \cdot \beta_\theta. \label{al:Fthth} 
\end{align}
It follows from \eqref{al:Frth}, \eqref{al:Fthr} that the equality 
$(F_r)_\theta = (F_\theta)_r$ turns out to be 
\begin{equation}\label{eq:bthr+br}
 \frac{\beta_\theta}{r} + \beta_r \frac{\cos \beta}{\sin \beta} = 
-\frac{\alpha'}{\alpha} . 
\end{equation}
Note that the right side of \eqref{eq:bthr+br} is of one variable $r$, 
so the left side is as well. Thus 
\begin{equation}\label{eq:pd-bthr+br}
 \frac{\pd}{\pd \theta} \left(
 \frac{\beta_\theta}{r} + \beta_r \frac{\cos \beta}{\sin \beta} 
\right) =0.
\end{equation}
On the other hand, using \eqref{al:Frr}--\eqref{al:Fthth}, we can rewrite  
\eqref{al:Gauss-curv} as 
\begin{equation*}
 K = \frac{\alpha(1 + \beta_\theta)(r \alpha' - \alpha)}{\varDelta^4}. 
\end{equation*} 
Here, $K$ must be a non-constant function of one variable $r$ 
by the assumption of concentric $K$-contours. 
It implies that $\beta_\theta$ is a function of one variable $r$.
Therefore, we may set $\beta_\theta = \phi(r)$ and hence 
\begin{equation}\label{eq:beta=phithpsi}
 \beta = \phi(r) \cdot \theta + \psi(r)
\end{equation}
for some functions $\phi(r)$, $\psi(r)$.   
It follows from \eqref{eq:pd-bthr+br} with \eqref{eq:beta=phithpsi} that 
\begin{equation*}
  \phi'(r) \{ \frac{1}{2} \sin 2\beta - \phi(r) \cdot \theta \} + \psi'(r) \phi(r)=0.
\end{equation*}
This implies that (i) $\frac{1}{2} \sin 2 \beta - \phi(r) \cdot \theta$ is 
independent of $\theta$ or (ii) $\phi'(r) =0$. In the case (i), 
by differentiating $\frac{1}{2} \sin 2 \beta - \phi(r) \cdot \theta$ by $\theta$, 
we have $(\cos 2 \beta -1)\phi(r) =0$, that is, 
\begin{equation}\label{eq:case(i)}
 \text{$\beta = n \pi$ for some integer $n$
or $\beta = \psi(r)$.}
\end{equation}
In the case (ii), the function $\phi$ is constant and $\psi' \phi =0$. 
Therefore $(\phi, \psi)=(k,l)$ for some constants $k,l$ or 
 $(\phi, \psi)=(0,\psi(r))$; in other words, 
\begin{equation}\label{eq:case(ii)}
 \text{$\beta = k \theta +l$ or $\beta = \psi(r)$.}
\end{equation}
Since the condition \eqref{eq:case(ii)} includes the condition \eqref{eq:case(i)}, 
we continue to discuss under the condition \eqref{eq:case(ii)}.  

\underline{In the case where $\beta = k \theta +l$}, the equation 
\eqref{eq:bthr+br} reduces to $\alpha' = -k \alpha / r$. 
Hence we have $\alpha = C r^{-k}$ for some constant $C$. 
It follows from \eqref{eq:Fth=ac-rFr=as} that $F= C_1 r^k \cos ( k \theta + l ) + C_2$ 
for some constants $C_1, C_2$. 
Thus the surface $\xbf$ is congruent to $\xbf_{k,C_1}$. 

\underline{In the case where $\beta = \psi(r)$}, the equation 
\eqref{eq:bthr+br} reduces to $\psi' \cot \psi = \alpha'/ \alpha $. 
It is solved as $\alpha \sin \psi = C$ for some constant $C$. 
The system of equations \eqref{eq:Fth=ac-rFr=as} turns out to be 
\begin{equation*}
 r F_r = \alpha(r) \cos ( \psi(r) ), \quad 
  F_\theta = C.  
\end{equation*} 
Therefore, we obtain  
\begin{equation*}
 F = C \theta + \int \frac{\alpha(r)}{r} \cos ( \psi(r) ) dr
 = C \theta + A(r)
\end{equation*}
for some function $A(r)$. Thus the surface $\xbf$ is helicoidal. 
\end{proof}

\begin{proof}[Proof of Theorem \ref{thm:semi-rot-G}]
Considering a rigid motion in $\R^3$, 
we may assume that the plane $P$ is the $xy$-plane and the line 
$l$ is the $z$-axis. 
The surface $\xbf$ is at least locally re-parameterized 
as $\xbf(r, \theta) = \left( r \cos \theta, r \sin \theta, F(r,\theta) \right)$. 
The Gauss map \eqref{al:unit-normal} is 
\begin{align*}
 \nbf = \frac{1}{\varDelta}
\begin{pmatrix}
F_\theta \sin \theta -r F_r \cos \theta \\  
- r F_r \sin \theta - F_\theta \cos \theta \\ 
r  
\end{pmatrix}
=
\begin{pmatrix}
 \cos \theta & - \sin \theta & 0 \\
 \sin \theta & \cos \theta & 0 \\
 0 & 0 & 1 
\end{pmatrix}
\frac{1}{\varDelta}
\begin{pmatrix}
 -r F_r \\ - F_\theta \\ r 
\end{pmatrix}
\end{align*}
in the column-vector form. Since $\nbf$ is semi-rotational, 
\begin{enumerate}[(i)]
 \item the vector-valued function
\begin{equation}\label{eq:v-v-ft}
\dfrac{1}{\varDelta}
\begin{pmatrix}
 -r F_r , - F_\theta ,  r 
\end{pmatrix} 
\end{equation}
 is of one variable $r$, or 
 \item 
there exist $m \in \mathbb R$ 
and $\phi_1=\phi_1(r), \psi = \psi(r)$  
such that 
\begin{equation*}
\frac{1}{\varDelta}
\begin{pmatrix}
 -r F_r , - F_\theta , r 
\end{pmatrix}
= 
\begin{pmatrix}
 \phi_1(r) \cos m \theta ,  \phi_1(r) \sin m \theta , \psi(r) 
\end{pmatrix}. 
\end{equation*}
\end{enumerate}

In the case (i), each component of \eqref{eq:v-v-ft} is of one variable $r$. 
Hence, $\varDelta$, $F_r$ and $F_\theta$ are functions of one variable $r$. 
This implies that $F$ must be of the form 
$F = a  \theta + \psi(r)$ for a constant $a$ and a function $\psi(r)$. 
Therefore 
$\xbf(r, \theta) = \left( r \cos \theta, r \sin \theta, a \theta + \psi(r) \right)$, 
that is, $\xbf$ is a helicoidal surface. 

\medskip

In the case (ii), the third component of \eqref{eq:v-v-ft} is of one variable $r$. 
Hence, $\varDelta$ is a function of one variable $r$.
Setting $-\phi_1(r) \cdot \varDelta = \varphi(r)$, we have 
\begin{equation}\label{eq:Fr-Fth}
\begin{cases}
 F_r = \frac{\varphi(r)}{r} \cos m \theta \\
 F_\theta = \varphi(r) \sin m \theta. 
\end{cases} 
\end{equation}
Thus the equality $(F_r)_\theta = (F_\theta)_r$ turns out to be 
\begin{equation}\label{eq:case-(ii)}
-m \frac{\varphi(r)}{r} \sin m \theta = \varphi'(r) \sin m \theta . 
\end{equation}

\underline{In the case where $m = 0$}, 
the system of equations \eqref{eq:Fr-Fth} 
turns out to be $F_r = \varphi(r)/r, F_\theta = 0$. Therefore, $F =F(r)$. 
This implies that $\xbf$ is a rotational surface.   

\underline{In the case where $m \ne 0$}, the equation \eqref{eq:case-(ii)} 
leads to $-m \frac{\varphi(r)}{r} = \varphi'(r)$. 
Therefore, $\varphi(r) = C r^{-m}$ ($C$ is a constant.) 
Then the solution to the system of equations \eqref{eq:Fr-Fth} is     
\begin{equation*}
 F(r, \theta) = C_1 r^{-m} \cos m \theta + C_2 \quad \text{($C_1, C_2$ are constants).}
\end{equation*}
Thus $\xbf$ is congruent to $\xbf_{-m, C_1}$.  
\end{proof}

\section{Surfaces with parallel $K$-contours}

We shall discuss 
here using the same notations and assumption
as in Section~\ref{sec:s-cKc}.


\begin{definition}\label{def:pKc}
We say that a surface 
$\xbf \colon M \to \R^3$  has \textit{parallel K-contours} if 
there exists a plane in $\R^3$ such that the orthogonal projection 
 $\pi \colon \R^3 \to P$ maps $\mathcal C$ to a family 
of parallel straight lines on $P$. 
\end{definition}

\subsection{ An example}

Let $k$, $c$ be non-zero real numbers. 
Consider a graph surface of 
\begin{equation*}
 z = c e^{kx} \cos ky,  
\end{equation*}  
that is, 
\begin{equation*}
\pbf_{k,c}(x,y) = \left( x, y , c e^{kx} \cos ky  \right).  
\end{equation*}
See Figure~\ref{fg:line}.
\begin{figure}[htbp] 
\centering
 \includegraphics[width=.40\linewidth]{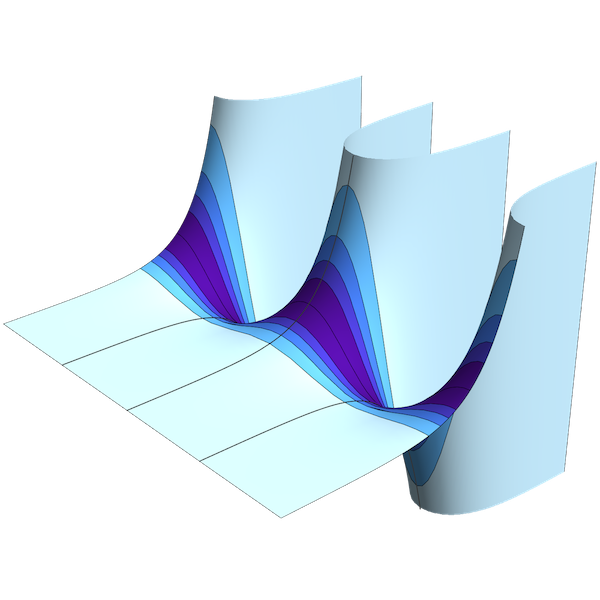} \ 
 \includegraphics[width=.40\linewidth]{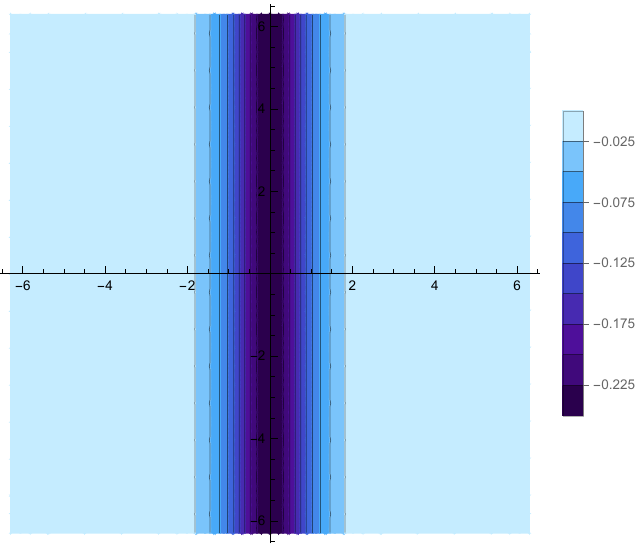} 
\caption{
The surface $\pbf_{k,c}$ (left) and its projection to the $xy$-plane (right) 
for $c=1$ and $k=1$. 
The surface is gray-scaled by its Gaussian curvature.}
\label{fg:line} 
\end{figure} 
The first and second fundamental forms $\fff, \sff$ and a unit normal $\nbf$ 
are as follows:  
\begin{align*}
& \begin{aligned}
 \fff = \ & (1+ c^2 k^2 e^{2kx} \cos^2 ky) dx^2 
-2 c^2 k^2 e^{2kx} \cos ky \sin ky \, dxdy \\
& + (1+ c^2 k^2 e^{2kx} \sin^2 ky) dy^2 , 
\end{aligned}
\\
 & \nbf(x,y) = \frac{1}{\sqrt{ 1+ c^2 k^2 e^{2kx} }}
\left( -c k e^{kx} \cos ky, c k e^{kx} \sin ky, 1 \right) , 
\\
&  \sff = \frac{c k^2 e^{kx}}{\sqrt{ 1+ c^2 k^2 e^{2kx} }} 
\left( \cos ky \, dx^2 -2 \sin ky \, dxdy - \cos ky \, dy^2  \right). 
\end{align*}
From these, the Gaussian curvature $K$ and the mean curvature $H$ are 
\begin{align}
& K = K(x) = - \frac{c^2 k^4 e^{2kx}}{(1+c^2 k^2 e^{2kx})^2}, \label{al:Gauss-pKc} 
\\ 
& H = H(x, y) = - \frac{c^3 k^4 e^{3kx} \cos ky}{2(1+c^2 k^2 e^{2kx})^{3/2}} . 
\notag 
\end{align}
It follows directly from \eqref{al:Gauss-pKc} that 
$\pbf_{k,c}$ has parallel $K$-contours.    

\subsection{Characterizations of the surface $\pbf_{k,c}$}

An assertion similar to Theorem~\ref{thm:cKc-dAinv}  
holds for surfaces with parallel $K$-contours: 
\begin{theorem}\label{thm:pKc-dAinv}
 Let $\xbf \colon M \to \R^3$ be a 
surface with parallel $K$-contours. 
If the area element $dA$ is invariant along each $K$-contour, 
then $\xbf$ is locally congruent to a surface $\pbf_{k,c}$
for some $k,c$. 
\end{theorem}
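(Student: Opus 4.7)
The plan is to imitate the proof of Theorem \ref{thm:cKc-dAinv}, replacing polar coordinates adapted to concentric circles by Cartesian coordinates adapted to parallel lines. After a rigid motion I may assume $P$ is the $xy$-plane and that the projected $K$-contours are the vertical lines $x=\mathrm{const}$, so that $K=K(x)$. Locally $\xbf$ is a graph $\xbf(x,y)=(x,y,F(x,y))$, and since $dA=\varDelta\,dx\wedge dy$ with $\varDelta=\sqrt{1+F_x^2+F_y^2}$, the hypothesis forces $F_x^2+F_y^2$ to be a function of $x$ alone. Hence there exist smooth $\alpha=\alpha(x)$ and $\beta=\beta(x,y)$ with
\begin{equation*}
 F_x=\alpha(x)\cos\beta,\qquad F_y=\alpha(x)\sin\beta.
\end{equation*}

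The next step exploits two pieces of information in parallel. The integrability condition $(F_x)_y=(F_y)_x$ reduces to
\begin{equation*}
 \cos\beta\cdot\beta_x+\sin\beta\cdot\beta_y=-\frac{\alpha'}{\alpha}\sin\beta.
\end{equation*}
A direct computation of $F_{xx}F_{yy}-F_{xy}^2$ from the above representation of $F_x,F_y$, simplified by the integrability identity to eliminate the combination $\cos\beta\cdot\beta_x+\sin\beta\cdot\beta_y$, collapses the graph formula for $K$ to the clean expression
\begin{equation*}
 K=\frac{\alpha\alpha'\beta_y}{\varDelta^4}.
\end{equation*}
Since $K$ depends only on $x$ and the standing grad-$K$ hypothesis rules out $K\equiv 0$ (which in particular prevents $\alpha\alpha'\equiv 0$), $\beta_y$ must be a function of $x$ alone. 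Thus $\beta=\phi(x)y+\psi(x)$ for smooth $\phi,\psi$.

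Substituting this form back into the integrability identity and differentiating in $y$ produces
\begin{equation*}
 \tfrac{1}{2}\phi'(x)\sin 2\beta=\phi(x)\bigl(\phi'(x)y+\psi'(x)\bigr),
\end{equation*}
and this is where I expect the main technical obstacle. The left side is bounded in $y$ while the right side is affine in $y$ with slope $\phi(x)\phi'(x)$; carefully excluding each degenerate branch (in which one would force $\beta_y=0$ and hence $K\equiv 0$, contradicting the hypothesis) will force $\phi\equiv m$ to be a nonzero constant. A parallel boundedness argument applied to the resulting reduced identity $m+\psi'(x)\cot\beta=-\alpha'/\alpha$ then forces $\psi\equiv l$ to be constant as well, whence $\alpha(x)=Ce^{-mx}$ for some $C\neq 0$. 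Integrating the explicit expressions for $F_x$ and $F_y$ gives $F(x,y)=-\tfrac{C}{m}e^{-mx}\cos(my+l)+\mathrm{const}$. Finally, setting $k=-m$ and $c=C/k$, and absorbing the additive constant in $z$ together with the translation $y\mapsto y+l/k$ into the ambient rigid motion, $\xbf$ becomes locally congruent to $\pbf_{k,c}$.
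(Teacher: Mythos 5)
Your proposal is correct and takes essentially the approach the paper intends: the paper omits this proof precisely because it is the Cartesian-graph analogue of Theorem \ref{thm:cKc-dAinv}, and your argument reproduces that route (the decomposition $F_x=\alpha\cos\beta$, $F_y=\alpha\sin\beta$, the integrability identity, the collapsed formula $K=\alpha\alpha'\beta_y/\varDelta^4$, the reduction $\beta=\phi(x)y+\psi(x)$, and the exclusion of the $\beta_y\equiv 0$ branch via $K\equiv 0$, which is why no helicoidal-type alternative survives here). The only small repair needed is in the elimination of the mixed case: since $y$ ranges only over an interval in a local chart, the ``bounded versus affine in $y$'' justification is not quite valid as stated; instead differentiate the identity $\tfrac12\phi'\sin 2\beta=\phi(\phi'y+\psi')$ in $y$ (as the paper does with $\theta$ in the concentric case), which locally forces either $\phi'=0$ or $\phi=0$, and then $\phi\psi'=0$, giving exactly your conclusion that $\phi\equiv m\neq 0$ and $\psi\equiv l$ are constants.
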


We omit the proof because it is quite similar to that of 
Theorem \ref{thm:cKc-dAinv} by 
discussing about a graph surface $(x,y,F(x,y))$. 

\bigskip

As well as a surface $\xbf_{m,c}$ in Section 2, 
the Gauss map $\nbf$ of a surface $\pbf_{k,c}$ 
satisfies the following property: 
\begin{equation*}
 \nbf (x, y + \alpha) = \mathcal R_{-k \alpha} \circ \nbf (x , y), 
\end{equation*}
where $\mathcal R_{-k \alpha}$ denotes the rotation of angle $-k \alpha$ 
with respect to the $z$-axis. 
Focusing on this property, we give the following definition:
\begin{definition}
 An immersed surface $\xbf \colon M \to \R^3$ is said to have 
\textit{quasi-rotational Gauss map} if there exist 
a straight line $l \subset \R^3$, 
a plane $P \subset \R^3$, a vector $\vbf$ parallel to $P$, and a $1$-parameter group 
$\{ \phi_t \}$ of diffeomorphisms of $M$ such that 
\begin{enumerate}
 \item $l$ is orthogonal to $P$, 
 \item $\pi \circ \xbf \circ \phi_t = T_{t \vbf} \circ \pi \circ \xbf$, and
 \item $\nbf \circ \phi_t = \mathcal R_{k t} \circ \nbf$ for some constant $k$
\end{enumerate}
with a suitable choice of orientations of $l$ and $P$,  
where $\pi \colon \R^3 \to P$ is the orthogonal projection,  
$T_{t \vbf}$ denotes a parallel translation on $P$ 
of the translation vector $t \vbf$, 
and $\mathcal R_{k t}$ denotes a rotation in $\R^3$ of angle $kt$  
with respect to the axis $l$. 
\end{definition}
 
Note that a cylindrical surface has quasi-rotational Gauss map with $k=0$, 
however it should be said to have \textit{parallel} Gauss map. 
So we shall use the term \textit{`strictly quasi-rotational'} 
in the sense of `quasi-rotational but not parallel'. 

An assertion similar to Corollary \ref{cor:semi-rot-G} holds for surfaces with 
strictly quasi-rotational Gauss map. 

\begin{theorem}\label{thm:quasi-rot-G}
 Let $\xbf \colon M \to \R^3$ be a surface   
with strictly quasi-rotational Gauss map.  
Then $\xbf$ is locally congruent to a surface $\pbf_{k,c}$
for some $k,c$.  
\end{theorem}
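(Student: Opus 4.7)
The plan is to mirror the proof of Theorem~\ref{thm:semi-rot-G}, replacing the planar rotations by planar translations and using the ``strictly quasi-rotational'' hypothesis $k \neq 0$ directly in place of its case analysis. By a rigid motion in $\R^3$ we may assume $P$ is the $xy$-plane and $l$ is the $z$-axis; rotating about $l$ and rescaling the parameter $t$, we may further take $\vbf = (0, 1, 0)$. Condition~(2) then forces $\phi_t(x, y) = (x, y + t)$ in local coordinates, and we locally express $\xbf$ as a graph $\xbf(x, y) = (x, y, F(x, y))$.

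Writing the unit normal as $\nbf = (-F_x, -F_y, 1)/\varDelta$ with $\varDelta = \sqrt{1 + F_x^2 + F_y^2}$ and reading condition~(3), $\nbf(x, y + t) = \mathcal R_{kt}\nbf(x, y)$, componentwise, the third component yields $\varDelta = \varDelta(x)$, and the other two become the $2 \times 2$ rotation identities
\begin{align*}
  F_x(x, y+t) &= F_x(x, y)\cos kt - F_y(x, y)\sin kt, \\
  F_y(x, y+t) &= F_x(x, y)\sin kt + F_y(x, y)\cos kt.
\end{align*}
Specializing to $y = 0$ and relabelling $t$ as $y$, with $a(x) := F_x(x, 0)$ and $b(x) := F_y(x, 0)$, this gives
\[
  F_x(x, y) = a(x)\cos ky - b(x)\sin ky, \qquad F_y(x, y) = a(x)\sin ky + b(x)\cos ky.
\]
Imposing integrability $(F_x)_y = (F_y)_x$ and matching coefficients of the linearly independent functions $\sin ky$ and $\cos ky$ (valid precisely because $k \neq 0$) produces the decoupled ODEs $a'(x) = -k\, a(x)$ and $b'(x) = -k\, b(x)$. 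Hence $a(x) = A e^{-kx}$, $b(x) = B e^{-kx}$, and integrating in $y$ gives
\[
  F(x, y) = \frac{e^{-kx}}{k}\bigl(B \sin ky - A \cos ky\bigr) + \text{const}.
\]
The combination $A \cos ky - B \sin ky = \sqrt{A^2 + B^2}\,\cos k(y - y_0)$ can be reduced to a single cosine by an ambient translation $y \mapsto y + y_0$, and the additive constant absorbed by a vertical translation; the resulting $F = c\, e^{-kx} \cos ky$ is exactly $\pbf_{-k, c}$.

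The argument is a direct linear analogue of the semi-rotational case, so I foresee no genuine obstacle. The only points requiring a little care are the sign of $k$ relating the definition to the parameter in $\pbf_{k,c}$, and the role of the strictness hypothesis at the coefficient-matching step: when $k = 0$ the identities $\sin ky \equiv 0$, $\cos ky \equiv 1$ make the matching vacuous and one recovers only that $F$ is independent of $y$, i.e.\ the cylindrical (parallel Gauss map) case that ``strictly'' explicitly excludes.
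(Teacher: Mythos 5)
Your proof is correct and is exactly the argument the paper intends: the authors omit the proof of this theorem, stating it is the analogue of Theorem~\ref{thm:semi-rot-G} carried out for a graph $(x,y,F(x,y))$, which is precisely what you do (deriving $\varDelta=\varDelta(x)$ from the third component, the rotation identities for $(F_x,F_y)$, and the ODEs $a'=-ka$, $b'=-kb$ from integrability, with strictness $k\neq 0$ replacing the case split of the rotational proof). Only cosmetic remarks: the ``constant'' obtained by integrating $F_y$ in $y$ is a priori a function of $x$ and is pinned down to a true constant by the $F_x$ equation, and the degenerate case $A=B=0$ (a plane) is excluded by the paper's standing assumption that $\operatorname{grad} K$ does not vanish on any open set.
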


We omit the proof because 
it is quite similar to that of 
Theorem \ref{thm:semi-rot-G} by 
discussing about a graph surface $(x,y,F(x,y))$.

\bigskip
\bigskip
\bigskip

\textbf{Acknowledgments:}
The authors would like to thank Professor Wayne Rossman for his helpful comments.

The first author was 
supported by Grant-in-Aid for 
Scientific Research (C) No.~21K03226
 from the Japan Society for the Promotion of Science.
The second author was 
supported by Grant-in-Aid for 
Scientific Research (C) No.~23K03086
 from the Japan Society for the Promotion of Science.
The third author was 
supported by Grant-in-Aid for 
Scientific Research (C) No.~20K03617
 from the Japan Society for the Promotion of Science.

%
%
%
%
%
%
%
%
%
%
%
%
%
%
%
%

\bibliographystyle{amsplain}

\end{document}